 \newtheorem{thm}{Theorem}[section]
 \newtheorem{lem}[thm]{Lemma}
 \newtheorem{prop}[thm]{Proposition}
 \theoremstyle{definition}
 \newtheorem{dfn}[thm]{Definition}
 \theoremstyle{remark}
 \newtheorem*{rmk}{Remark}
 \newtheorem*{eg*}{Example}
 \newcommand{\Dfn}[1]{\emph{#1}}                            
 \let\size=\abs
 \newcommand{\shrink}[2][0pt]{\hbox to #1{\hss #2\hss}}
 \DeclareMathOperator{\bigDisjointUnion}{\dot\bigcup}
 \DeclareMathOperator{\disjointUnion}{\mathaccent\cdot\cup}
 \author{Martin Rubey}
 \address{Institut f\"ur Algebra, Zahlentheorie und Diskrete Mathematik, Leibniz
   Universit\"at Hannover, Welfengarten 1, D-30167 Hannover, Germany}
 \email{martin.rubey@math.uni-hannover.de}
 \urladdr{http://www.iazd.uni-hannover.de/~rubey/}
 \keywords{ribbon Schur functions, compositions, Dirichlet series}
 \title{The number of ribbon Schur functions}
\begin{document}
 \begin{abstract}
   We present a formula for the number of distinct ribbon Schur
   functions of given size and height.
 \end{abstract}
 \maketitle

 \section{Introduction}
 \label{sec:introduction}
 An important basis for the space of homogeneous symmetric functions
 of degree $n$ is the set of \Dfn{Schur functions} $s_\lambda$,
 indexed by partitions $\lambda$ of $n$.  A larger set of homogeneous
 symmetric functions of degree $n$ is the set of \Dfn{skew Schur
   functions} $s_{\lambda/\mu}$, indexed by skew shapes $\lambda/\mu$
 of size $n$, that is pairs of partitions
 $\lambda=(\lambda_1\geq\lambda_2\geq\dots\geq\lambda_k>0)$ of $n+m$
 and $\mu=(\mu_1\geq\mu_2\geq\dots\geq\mu_\ell>0)$ of $m$, such that
 $k$, the number of parts of $\lambda$, is strictly larger than
 $\ell$, the number of parts of $\mu$, and $\mu_i\leq \lambda_i$ for
 $i\leq\ell$.  When $\mu$ is the empty partition,
 $s_{\lambda/\mu}=s_\lambda$.  Since the set of Schur functions is a
 basis, there must be relations between skew Schur functions.
 Equalities between skew functions have been studied by Stephanie van
 Willigenburg, Peter McNamara, Vic Reiner and Kristin
 Shaw~\cite{MR2176598,MR2353252,MR2500893}. So far however, only
 partial results and a conjecture are available.

 The situation is very different for the subset of \Dfn{ribbon Schur
   functions}, that are indexed by \Dfn{ribbons} (also known as rim
 hooks or border strips), i.e., skew shapes that satisfy
 $\lambda_{i+1}=\mu_i+1$ for $i\leq\ell$.  Here are the ribbons of
 size $4$:
 \begin{equation*}
 \begin{tabular}[h]{cccccccc}
   \young(\hfil\hfil\hfil\hfil) & 
   \young(\hfil\hfil\hfil,\hfil) &
   \young(:\hfil\hfil,\hfil\hfil) &
   \young(::\hfil,\hfil\hfil\hfil) &
   \young(\hfil\hfil,\hfil,\hfil) &
   \young(:\hfil,\hfil\hfil,\hfil) &
   \young(:\hfil,:\hfil,\hfil\hfil) &
   \young(\hfil,\hfil,\hfil,\hfil)
 \end{tabular}
 \end{equation*}
 It can be shown that the space of homogeneous symmetric functions of
 degree $n$ is also generated by the set of ribbon Schur functions of
 size $n$.  For these functions, Louis J. Billera, Hugh Thomas, and
 Stephanie van Willigenburg~\cite{MR2233132} give a criterion for
 deciding when they are equal.  In this article we use this criterion
 to count the number of distinct ribbon Schur functions of given size
 and given height, that is, one less than the number of parts of
 $\lambda$.

 Note that ribbons $\lambda/\mu$ of size $n$ and height $m-1$ can be
 identified with compositions $\alpha$ of size $n$ and length $m$ by
 setting $\alpha_i=\lambda_i-\mu_i$ for all $i$.  Two compositions
 $\alpha$ and $\beta$ are called equivalent, denoted
 $\alpha\sim\beta$, if and only if the corresponding ribbon Schur
 functions are equal.

 In the following section we recall a binary operation on compositions
 from~\cite{MR2233132}, that makes the set of compositions into a
 monoid with (almost) unique factorisation.  One of the main theorems
 of~\cite{MR2233132} shows that equivalence of compositions is easily
 determined given their factorisations.

 In Section~\ref{sec:size} we present a relatively appealing formula
 for the number of distinct ribbon Schur functions of given size,
 while in Section~\ref{sec:length} we exhibit a (not nearly as
 beautiful) formula for the number of distinct ribbon Schur functions
 of given size and height.  For more information on symmetric
 functions we refer to Chapter~7 of Enumerative
 Combinatorics~2~\cite{EC2}.

 \section{Composition of Compositions\\ and equality of ribbon Schur
   functions}\label{sec:comp-comp}
 In this section we collect the definitions and results
 from~\cite{MR2233132} that are relevant for our approach.  As
 mentioned before, the basic objects we will be working with are
 compositions:
 \begin{dfn}
   A composition $\alpha$ of a positive integer $m$, denoted
   $\alpha\vDash m$, is a list of positive integers
   $(a_1,a_2,\dots,a_k)$ such that $a_1+a_2+\dots+a_k=m$. We refer to
   each of the $a_i$ as components, and say that $\alpha$ has length
   $l(\alpha) = k$ and size $\size{\alpha} = m$.
 \end{dfn}

 \begin{dfn}
   Let $\alpha=(a_1,a_2,\dots,a_k)\vDash m$ and
   $\beta=(b_1,b_2,\dots,b_\ell)\vDash n$.  Then the
   \Dfn{concatenation} of $\alpha$ and $\beta$ is the composition
   $$\alpha\cdot\beta=(a_1,\dots,a_k,b_1,\dots,b_\ell)\vDash n+m.$$
   Their \Dfn{near concatenation} is
   $$\alpha\odot\beta=(a_1,\dots,a_k+b_1,\dots,b_\ell)\vDash n+m.$$
   Writing 
   $$
   \alpha^{\odot n}=
   \underset{n}{\underbrace{\alpha\odot\alpha\odot\dots\odot\alpha}}
   $$
   we define the \Dfn{composition} of $\alpha$ and $\beta$ as
   $$
   \alpha\circ\beta=\beta^{\odot a_1}\cdot\beta^{\odot
     a_2}\cdots\beta^{\odot a_k}\vDash mn.
   $$
   The composition $\alpha=(a_1,a_2,\dots,a_k)$ is \Dfn{symmetric} if
   it coincides with its \Dfn{reversal}
   $\alpha^*=(a_k,a_{k-1},\dots,a_1)$.
 \end{dfn}

 The following theorem shows that composition of compositions is a
 very well behaved operation indeed:
 \begin{thm}[\cite{MR2233132}, Propositions~3.3, 3.7, 3.8 and 3.9]
   \label{thm:monoid-size-length} The set of compositions together
   with the operation $\circ$ is a monoid, i.e., $\circ$ is
   associative and has neutral element $(1)$.  Furthermore,
   $\size{\alpha\circ\beta}=\size{\alpha}\size{\beta}$ and
   $l(\alpha\circ\beta) = l(\alpha) +
   \size{\alpha}\left(l(\beta)-1\right)$.  Finally,
   $(\alpha\circ\beta)^*=\alpha^*\circ\beta^*$.
 \end{thm}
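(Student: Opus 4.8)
The plan is to pass to the standard ``partial sums'' encoding of compositions and to show that under this encoding $\circ$ becomes a transparent mixed-radix operation, from which all four assertions fall out. Concretely, to a composition $\alpha=(a_1,\dots,a_k)\vDash m$ I associate the set of partial sums $S(\alpha)=\{a_1,\,a_1+a_2,\,\dots,\,a_1+\dots+a_{k-1}\}\subseteq\{1,\dots,m-1\}$; this is a bijection between compositions of $m$ and subsets of $\{1,\dots,m-1\}$, with $l(\alpha)=|S(\alpha)|+1$. The point of the encoding is that it linearises the two basic gluings: one checks directly that $S(\alpha\cdot\beta)=S(\alpha)\cup\{m\}\cup(m+S(\beta))$ and $S(\alpha\odot\beta)=S(\alpha)\cup(m+S(\beta))$, i.e.\ concatenation inserts a wall at the seam and near concatenation does not, where $m+T$ denotes $\{m+t:t\in T\}$.

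First I would read off the effect of $\circ$ on $S$. Iterating the near-concatenation rule gives $S(\beta^{\odot a})=\bigcup_{j=0}^{a-1}(jn+S(\beta))$ for $\beta\vDash n$, and then concatenating the blocks $\beta^{\odot a_1},\dots,\beta^{\odot a_k}$ and bookkeeping the seams yields, for $\alpha\vDash m$ and $\beta\vDash n$,
\[
S(\alpha\circ\beta)=\Bigl(\bigcup_{j=0}^{m-1}(jn+S(\beta))\Bigr)\;\cup\;\{sn:s\in S(\alpha)\}.
\]
Writing a position $p\in\{1,\dots,mn-1\}$ via its division $p=qn+r$ with $0\le r<n$, this says $p\in S(\alpha\circ\beta)$ iff $r\in S(\beta)$ when $r\ne0$, and iff $q\in S(\alpha)$ when $r=0$; the two cases are disjoint because they are distinguished by whether $n\mid p$. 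Size is now immediate ($\size{\alpha\circ\beta}=mn$), and counting the two disjoint parts gives $|S(\alpha\circ\beta)|=m\,|S(\beta)|+|S(\alpha)|$, which upon adding $1$ is exactly the claimed length $l(\alpha)+\size{\alpha}(l(\beta)-1)$. The neutral element is the encoding of $S((1))=\emptyset$, and unravelling the definitions confirms $(1)\circ\beta=\beta$ and $\alpha\circ(1)=\alpha$.

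The main work is associativity, and here the mixed-radix shape of the rule does all the lifting. With $\alpha\vDash m$, $\beta\vDash n$, $\gamma\vDash p$, both triple compositions live in $\{1,\dots,mnp-1\}$, and I would write each position uniquely as $P=anp+bp+c$ with $0\le a<m$, $0\le b<n$, $0\le c<p$. Applying the rule twice to $(\alpha\circ\beta)\circ\gamma$ (splitting off the last digit $c$, then resolving $S(\alpha\circ\beta)$ on the remaining $an+b$) and twice to $\alpha\circ(\beta\circ\gamma)$ (splitting off the block $bp+c$, then resolving $S(\beta\circ\gamma)$), both computations reduce to the same three-way dichotomy: $P$ lies in the wall set iff $c\in S(\gamma)$ when $c\ne0$; iff $b\in S(\beta)$ when $c=0\ne b$; and iff $a\in S(\alpha)$ when $c=b=0$. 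Hence $S((\alpha\circ\beta)\circ\gamma)=S(\alpha\circ(\beta\circ\gamma))$, and injectivity of the encoding gives associativity. I expect the only place demanding care is keeping the boundary cases ($r=0$ versus $r\ne0$, and which quotient or remainder is being inspected) straight when the rule is applied in the two nesting orders.

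Finally, the reversal identity is cleanest to treat separately. Since $*$ reverses the order of components, it is an anti-automorphism for both gluings, $(\gamma\cdot\delta)^*=\delta^*\cdot\gamma^*$ and $(\gamma\odot\delta)^*=\delta^*\odot\gamma^*$ (equivalently, $S(\alpha^*)=\{m-s:s\in S(\alpha)\}$). Because $\beta^{\odot a}$ is built from identical copies, $(\beta^{\odot a})^*=(\beta^*)^{\odot a}$, and applying the concatenation anti-automorphism to $\alpha\circ\beta=\beta^{\odot a_1}\cdots\beta^{\odot a_k}$ reverses the order of the blocks to $(\beta^*)^{\odot a_k}\cdots(\beta^*)^{\odot a_1}$. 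Since $\alpha^*=(a_k,\dots,a_1)$, this is precisely $\alpha^*\circ\beta^*$, proving $(\alpha\circ\beta)^*=\alpha^*\circ\beta^*$.
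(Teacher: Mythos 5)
Your proof is correct: the partial-sum encoding, the displayed formula for $S(\alpha\circ\beta)$, the mixed-radix three-way dichotomy for associativity, and the anti-automorphism argument for reversal all check out, including the boundary cases $r=0$ versus $r\neq 0$. Note that the paper itself gives no proof of this statement---it is imported verbatim from \cite{MR2233132}---and your argument is essentially the route taken in that source, where Billera, Thomas and van Willigenburg likewise identify a composition of $n$ with a subset of $\{1,\dots,n-1\}$ and describe $\circ$ on subsets exactly as in your formula, so you have faithfully reconstructed the standard proof rather than found a new one.
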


 Note that composition of compositions is not commutative.  For
 example, $(1,1)\circ (2)=(2)^{\odot 1}\cdot (2)^{\odot 1}=(2,2)$, but
 $(2)\circ(1,1)=(1,1)^{\odot 2}=(1,1)\odot(1,1)=(1,2,1)$.

 \begin{dfn}
   If a composition $\alpha$ is written in the form
   $\alpha_1\circ\alpha_2\circ\dots\circ\alpha_k$ then we call this a
   \Dfn{factorisation} of $\alpha$. A factorisation $\alpha =
   \beta\circ\gamma$ is called trivial if any of the following
   conditions are satisfied:
   \begin{enumerate}
   \item one of $\beta$ and $\gamma$ is the composition $1$,
   \item the compositions $\beta$ and $\gamma$ both have length $1$,
   \item the compositions $\beta$ and $\gamma$ both have all
     components equal to $1$.
   \end{enumerate}

   A factorisation $\alpha_1\circ\alpha_2\circ\dots\circ\alpha_k$ is
   called \Dfn{irreducible} if no $\alpha_i\circ\alpha_{i+1}$ is a
   trivial factorisation, and each $\alpha_i$ admits only trivial
   factorisations.  We call a composition $\alpha$ \Dfn{irreducible},
   if it has not length $1$, not all of its components are equal to
   $1$ and it admits only trivial factorisations.
 \end{dfn}

 \begin{thm}[\cite{MR2233132}, Theorem 3.6]\label{thm:uniqueness}
   The irreducible factorisation of any composition is unique.
 \end{thm}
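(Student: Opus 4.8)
My plan is to make the combinatorics transparent by encoding compositions as binary words. A composition $\gamma\vDash n$ corresponds bijectively to its \emph{gap word} $w(\gamma)\in\{\cdot,\odot\}^{\,n-1}$, whose $i$-th letter records whether the $i$-th and $(i+1)$-st boxes of the ribbon lie in different parts~($\cdot$) or in the same part~($\odot$). Under this encoding $(1)\mapsto\varepsilon$ (the empty word), the length-one composition $(n)\mapsto\odot^{\,n-1}$, and the all-ones composition $(1^k)\mapsto\cdot^{\,k-1}$; thus the three ``degenerate'' families are exactly $\varepsilon$, the pure $\odot$-words, and the pure $\cdot$-words. A short calculation from the definition of $\circ$ shows that $w(\alpha\circ\beta)$ is obtained by \emph{tiling}: writing $w(\alpha)=A_1\cdots A_{m-1}$ with $m=\size{\alpha}$ and $B=w(\beta)$, one has
\[
 w(\alpha\circ\beta)=B\,A_1\,B\,A_2\,B\cdots A_{m-1}\,B,
\]
that is, $m$ copies of the tile $B$ separated by the junction letters of $\alpha$. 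This is consistent with the size and length formulas of Theorem~\ref{thm:monoid-size-length}, and the reversal $\gamma\mapsto\gamma^*$ becomes plain word reversal, so the stated symmetry $(\alpha\circ\beta)^*=\alpha^*\circ\beta^*$ lets me convert any ``leftmost'' argument into a ``rightmost'' one for free.

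Existence of an irreducible factorisation follows by strong induction on $\size{\alpha}$: if $\alpha$ is itself an atom (of one of the three types) there is nothing to do, and otherwise $\alpha$ admits a nontrivial factorisation $\alpha=\beta\circ\gamma$ into factors of strictly smaller size, each of which factorises into atoms by induction. Concatenating these factorisations, the only adjacency whose product can be trivial is the one at the seam, and such a merge can only occur between two pure runs of the same type, collapsing them into a single atom; since each merge strictly decreases the number of factors, the process terminates in an irreducible factorisation.

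For uniqueness I argue by induction on $\size{\alpha}$, peeling off the \emph{rightmost} factor. The heart of the matter is a rigidity claim: if $\alpha=\gamma\circ\delta$ is the last step of an irreducible factorisation, then the tile $D=w(\delta)$ and the outer word $w(\gamma)$ are recovered unambiguously from $w(\alpha)=D\,C_1\,D\cdots C_{q-1}\,D$, where $q=\size{\gamma}$. When $\delta$ is a \emph{genuine} irreducible, $D$ contains both letters $\cdot$ and $\odot$, so no copy of $D$ can slide inside a uniform run, and the hypothesis that $\delta$ admits only trivial factorisations prevents its tile-occurrences from overlapping; hence the block positions, the count $q=\size{\alpha}/\size{\delta}$, and the junction letters $C_i$ are forced, determining $\delta$ and $\gamma$. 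Once the rightmost atom is pinned down, right-cancellation of a nonempty tile is immediate from the displayed formula, and I apply the induction hypothesis to $\gamma$.

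The genuinely delicate step, and the one I expect to cost the most work, is the degenerate case in which the rightmost atom is a pure run, $\delta=(n)$ or $\delta=(1^k)$. There the tile $\odot^{\,n-1}$ or $\cdot^{\,k-1}$ \emph{can} slide, rigidity fails at the level of a single letter, and the parse is a priori ambiguous; this is exactly why the multiplicative non-uniqueness hidden in $(\mathbb{Z}_{>0},\times)$ does not contradict the theorem. The resolution is to show that the adjacency conditions in the definition of an irreducible factorisation, via conditions~(2) and~(3), force each pure-run atom to correspond to a \emph{maximal} run of $\cdot$'s or $\odot$'s in $w(\alpha)$ not already absorbed by a neighbouring genuine tile. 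Proving that these maximal runs sit at canonically determined positions, and that no genuine tile can masquerade as part of such a run, is the combinatorial core; with it in hand the rightmost atom is again unique and the induction closes.
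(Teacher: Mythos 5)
The paper does not actually prove this statement---it imports it wholesale from Billera--Thomas--van Willigenburg \cite{MR2233132}---so your attempt must be judged against the original proof there, which, notably, works with essentially your encoding (compositions of $n$ as subsets of $\{1,\dots,n-1\}$, equivalently your gap words, with $\circ$ acting by tiling). Your setup, the tiling formula for $w(\alpha\circ\beta)$, and the existence-by-induction argument are all correct. But the uniqueness argument has a genuine gap, in two places. First, the rigidity claim for a genuine irreducible tile $D$ is asserted rather than proved: ``the hypothesis that $\delta$ admits only trivial factorisations prevents its tile-occurrences from overlapping'' is a non sequitur as stated. An overlap of two occurrences of $D$ at offset $p$ gives $D$ a \emph{period} $p$ as a word, but a word period does not by itself yield a $\circ$-factorisation of $\delta$: for a period to come from a factorisation, the junction letters must align with the periodic structure in a compatible way, and converting periodicity into factorability (a Fine--Wilf-type argument) is exactly the hard content of the lemmas leading to Theorem~3.6 in \cite{MR2233132}. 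Second, even granting rigidity for a \emph{fixed} tile $D$, your argument only shows that once $\delta$ is known the parse of $w(\alpha)$ is forced---which is little more than right-cancellation. Uniqueness requires comparing two irreducible factorisations whose rightmost atoms are a priori \emph{different} compositions $\delta\neq\delta'$ of different sizes; you must rule out that $w(\alpha)$ admits two distinct tilings by two distinct tiles, and your sketch never confronts that situation.

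The degenerate case you explicitly defer is likewise not a routine verification to be supplied later: since $(m)\circ(n)=(mn)$ and the analogous identity holds for all-ones compositions, the pure runs generate submonoids isomorphic to $(\mathbb{Z}_{>0},\times)$, and the interaction between a sliding uniform tile and adjacent genuine tiles (for instance, whether a maximal $\odot$-run in $w(\alpha)$ splits between the final genuine tile and a trailing length-one factor) is where most of the case analysis in the original proof lives. You have correctly identified the right combinatorial model and correctly located the difficulty, but the two steps you label ``forced'' and ``the combinatorial core'' together \emph{are} the theorem; as it stands the proposal is a plausible programme, not a proof.
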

 It is not surprising that such a theorem is very useful to enumerate
 the underlying objects.  For experimentation it was also of great
 help to have a relatively efficient test for irreducibility, which is
 exhibited in Definition~4.11 and Lemma~4.15
 of~\cite{MR2233132}.\footnote{An implementation can be obtained from
   the author of the present article.}

 Finally, equivalence of compositions and therefore equality of ribbon
 Schur functions is reduced to factorisation by the following theorem.
 Note that it was well known before that reversal of compositions
 yields the same ribbon Schur functions, see for example
 Exercise~7.56 in Enumerative Combinatorics 2~\cite{EC2}, which
 includes also the natural extension to skew Schur functions.
 \begin{thm}[\cite{MR2233132}, Theorem 4.1]\label{thm:equivalence}
   Two compositions $\beta$ and $\gamma$ satisfy $\beta\sim\gamma$ if
   and only if for some $k$, $\beta =
   \beta_1\circ\beta_2\circ\dots\circ\beta_k$ and $\gamma =
   \gamma_1\circ\gamma_2\circ\dots\circ\gamma_k$ where, for each $i$,
   either $\gamma_i = \beta_i$ or $\gamma_i = \beta_i^*$.
 \end{thm}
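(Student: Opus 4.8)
The plan is to prove both directions separately, treating the forward (``if'') implication as an algebraic compatibility statement and the reverse (``only if'') implication as a faithfulness/unique-factorisation statement. Throughout write $r_\alpha$ for the ribbon Schur function indexed by the composition $\alpha$, so that $\alpha\sim\beta$ means exactly $r_\alpha=r_\beta$, and recall the two inputs I am allowed to take for granted: the classical reversal symmetry $r_\alpha=r_{\alpha^*}$ (Exercise~7.56 of~\cite{EC2}) together with the identities of Theorems~\ref{thm:monoid-size-length} and~\ref{thm:uniqueness}, in particular $(\alpha\circ\beta)^*=\alpha^*\circ\beta^*$ and the uniqueness of the irreducible factorisation.

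For the ``if'' direction it suffices to show that reversing a single factor of a $\circ$-product leaves the ribbon Schur function unchanged; iterating this over the indices $i$ with $\gamma_i=\beta_i^*$ then carries $\beta$ to $\gamma$. Because $\circ$ is associative, reversing the inner factor $\beta_i$ of $\beta_1\circ\dots\circ\beta_k$ can be localised provided I first establish that $\sim$ is a two-sided congruence for $\circ$, i.e.\ that $r_\mu=r_{\mu'}$ and $r_\nu=r_{\nu'}$ force $r_{\mu\circ\nu}=r_{\mu'\circ\nu'}$. Granting this, setting $\mu=\beta_i$, $\mu'=\beta_i^*$ and $\nu=\beta_{i+1}\circ\dots\circ\beta_k$ and invoking reversal symmetry reverses $\beta_i$ alone, while a second application with the block $\beta_1\circ\dots\circ\beta_{i-1}$ held fixed handles the left factors. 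Thus the whole ``if'' direction reduces to the congruence property.

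The technical heart, which I expect to be the main obstacle, is therefore a structural formula for $r_{\mu\circ\nu}$ exhibiting it as a function of $r_\mu$ and $r_\nu$ alone. I would derive such a formula by induction on $l(\mu)$, using the multiplication rule $r_\sigma r_\tau=r_{\sigma\cdot\tau}+r_{\sigma\odot\tau}$ to split off the first component of $\mu$ and tracking how concatenation $\cdot$ and near concatenation $\odot$ interact with the substitution $\beta\mapsto\beta^{\odot a}$ built into $\circ$. Once $r_{\mu\circ\nu}$ is written intrinsically in terms of $r_\mu$ and $r_\nu$, the congruence property, and with it the ``if'' direction, is immediate, and its invariance under $\nu\mapsto\nu^*$ is visibly consistent with $(\mu\circ\nu)^*=\mu^*\circ\nu^*$.

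For the ``only if'' direction, assume $r_\beta=r_\gamma$ and take the irreducible factorisations $\beta=\beta_1\circ\dots\circ\beta_k$ and $\gamma=\gamma_1\circ\dots\circ\gamma_m$ furnished by Theorem~\ref{thm:uniqueness}. I would first prove the base case that an irreducible composition is determined by its ribbon Schur function up to reversal, so that $r_\delta=r_{\delta'}$ with $\delta,\delta'$ irreducible forces $\delta'\in\{\delta,\delta^*\}$; the natural tool here is to expand $r_\delta$ in the monomial (or complete homogeneous) basis and isolate an extreme term, read off in a lexicographic order on the descent compositions, that recovers $\delta$ together with $\delta^*$ but nothing else. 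I would then peel off the outermost factor: extract an invariant of $r_\beta$ pinning down $\beta_k$ up to reversal, match it to $\gamma_m$, and cancel using that $\circ$ is cancellative on the image (a consequence of the same structural formula together with the multiplicativity of size and length in Theorem~\ref{thm:monoid-size-length}), reducing to $\beta_1\circ\dots\circ\beta_{k-1}$ versus $\gamma_1\circ\dots\circ\gamma_{m-1}$ and inducting on $k$, which also yields $k=m$. The genuinely hard point throughout is faithfulness: ruling out accidental coincidences $r_\delta=r_{\delta'}$ between inequivalent irreducibles, and it is exactly here, in the sharp choice of extreme term or term order, that I expect the bulk of the work to lie.
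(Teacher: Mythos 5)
This theorem is not proved in the paper at all: it is imported verbatim from \cite{MR2233132} (their Theorem~4.1), so there is no in-paper argument to compare against, and the only meaningful benchmark is the original proof of Billera, Thomas and van Willigenburg. Measured against that, your outline reproduces the correct broad architecture --- the ``if'' direction by showing $\sim$ is a congruence for $\circ$ and invoking $r_\alpha=r_{\alpha^*}$, the ``only if'' direction by unique irreducible factorisation (Theorem~\ref{thm:uniqueness}) and peeling off factors --- but at precisely the two places where you yourself locate the difficulty, the argument is deferred rather than given, and at one of them the tool you propose is demonstrably too weak.

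The concrete gap is the base case of the ``only if'' direction. The $h$-expansion of a ribbon is $r_\alpha=\sum_\beta(-1)^{l(\alpha)-l(\beta)}h_{\lambda(\beta)}$, summed over coarsenings $\beta$ of $\alpha$, so any single extreme term in a lexicographic (or dominance) order recovers at most $\lambda(\alpha)$, the multiset of components of $\alpha$ --- indeed the lexicographically largest term is $h_{(n)}$, identical for every $\alpha$ of size $n$, and the finest term is $h_{\lambda(\alpha)}$. This cannot separate irreducibles up to reversal: $(1,2,3)$ and $(2,1,3)$ both appear in the paper's own list of irreducible normalised compositions of size $6$, neither is the other nor the other's reversal, yet both have extreme term $h_{(3,2,1)}$. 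Distinguishing them requires the full multiset $\{\lambda(\beta):\beta\text{ a coarsening of }\alpha\}$ --- which is exactly the invariant Billera, Thomas and van Willigenburg start from and then laboriously analyse --- not a single leading term. Likewise, your peeling step presupposes an invariant of $r_\beta$ that pins down the outermost factor up to reversal together with cancellativity of $\circ$ on equivalence classes; that is essentially the content of the theorem itself, not a lemma one can quote. As it stands, the proposal is a plausible roadmap for the easy half and a restatement, rather than a proof, of the hard half.
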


 \section{The number of ribbon Schur functions of given size}
 \label{sec:size}

 \begin{dfn}
   We order the set of compositions of given length lexicographically.
   Thus, let $\alpha=(a_1,a_2,\dots,a_k)$ and
   $\beta=(b_1,b_2,\dots,b_k)$ be two compositions, then
   $\alpha<\beta$ if and only if $a_s<b_s$ for some $s$, such that
   $a_r=b_r$ for all $r<s$.  $\alpha$ is \Dfn{lexicographic minimal}
   if $\alpha\leq\alpha^*$.

   In view of Theorem~\ref{thm:uniqueness} and
   Theorem~\ref{thm:equivalence}, we call a composition
   \Dfn{normalised}, if all factors in its irreducible factorisation
   are lexicographic minimal.
 \end{dfn}

 Thus, to determine the number of distinct ribbon Schur functions, it
 is sufficient to count normalised compositions.  This is not hard to
 achieve using a suitable combinatorial decomposition.  The validity
 of our decomposition hinges on the following lemma:
 \begin{lem}\label{lem:lexmin}
   Consider a composition $\alpha$ with irreducible factorisation
   $\alpha_1\circ\alpha_2\circ\dots\circ\alpha_k$.  Then $\alpha$ is
   symmetric if and only if all $\alpha_i$ are symmetric for
   $i\in\{1,\dots,k\}$.

   If $\alpha$ is asymmetric, then there is an $\ell\in\{1,\dots,k\}$
   such that $\alpha_\ell$ is asymmetric, and $\alpha_i$ is symmetric
   for all $i>\ell$.  In this situation, $\alpha<\alpha^*$ if and only
   if $\alpha_\ell<\alpha_\ell^*$.
 \end{lem}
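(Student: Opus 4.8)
My plan is to treat the two assertions in turn, relying throughout on the reversal rule $(\mu\circ\nu)^*=\mu^*\circ\nu^*$ of Theorem~\ref{thm:monoid-size-length} and on uniqueness of irreducible factorisations (Theorem~\ref{thm:uniqueness}). For the first assertion one direction is immediate: iterating the reversal rule gives $\alpha^*=\alpha_1^*\circ\dots\circ\alpha_k^*$, so if every $\alpha_i$ is symmetric then $\alpha^*=\alpha$. For the converse I would first record that reversal leaves invariant every ingredient of the definitions of ``trivial'' and ``irreducible'': it preserves length and size, it fixes the composition $(1)$, and it fixes the property of having all components equal to $1$. Hence $\mu\circ\nu$ is a trivial factorisation if and only if $\mu^*\circ\nu^*$ is, and a composition admits only trivial factorisations if and only if its reversal does, because factorisations of $\mu^*$ are exactly the reversals of factorisations of $\mu$. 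It follows that $\alpha_1^*\circ\dots\circ\alpha_k^*$ is again an irreducible factorisation, this time of $\alpha^*$. If $\alpha$ is symmetric then $\alpha^*=\alpha$, and comparing this factorisation with $\alpha_1\circ\dots\circ\alpha_k$ through Theorem~\ref{thm:uniqueness} forces $\alpha_i^*=\alpha_i$ for all $i$, so every factor is symmetric.

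For the second assertion, existence of $\ell$ is a corollary of the first part: as $\alpha$ is asymmetric not all factors are symmetric, so I let $\ell$ be the largest index with $\alpha_\ell$ asymmetric, whence $\alpha_i$ is symmetric for $i>\ell$. Put $\gamma=\alpha_1\circ\dots\circ\alpha_{\ell-1}$ and $\delta=\alpha_{\ell+1}\circ\dots\circ\alpha_k$, reading an empty range as the unit $(1)$. By the first part $\delta$ is symmetric, so $\delta^*=\delta$, and associativity together with the reversal rule yields $\alpha=\gamma\circ R$ and $\alpha^*=\gamma^*\circ R^*$ with $R=\alpha_\ell\circ\delta$ and $R^*=\alpha_\ell^*\circ\delta$. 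The goal is then to strip away first $\gamma$ and then $\delta$ in two monotonicity steps, each proved by locating the first position at which the relevant compositions disagree.

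The step removing $\delta$ compares $R=\alpha_\ell\circ\delta$ with $R^*=\alpha_\ell^*\circ\delta$, which share their right factor. Writing out both as concatenations of blocks $\delta^{\odot(\text{component of }\alpha_\ell)}$ and $\delta^{\odot(\text{component of }\alpha_\ell^*)}$, the blocks agree up to the first component in which $\alpha_\ell$ and $\alpha_\ell^*$ differ, say the $s$-th; there the two exponents differ, and the discrepancy first surfaces at the last entry of that $s$-th block, which equals the last component of $\delta$ in the composite with the smaller exponent but the strictly larger glued value (last plus first component of $\delta$) in the other. Tracking the sign of this single discrepancy shows $R<R^*$ if and only if $\alpha_\ell<\alpha_\ell^*$.

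The step removing $\gamma$ is the crux. Here both factors change, but the point is that $\gamma$ cannot influence the first disagreement: regardless of $\gamma$, the composite $\gamma\circ R$ opens with a verbatim copy of the first $l(R)-1$ components of $R$, the glueing between blocks beginning only at position $l(R)$, and likewise $\gamma^*\circ R^*$ opens with the first $l(R)-1$ components of $R^*$. Now $R$ is asymmetric — for $R=R^*$ would force $\alpha_\ell=\alpha_\ell^*$ by the previous step — so it has length at least $2$, and the positions where $R$ and $R^*$ disagree form a set stable under $r\mapsto l(R)+1-r$; its least element $t$ therefore satisfies $t\le(l(R)+1)/2<l(R)$, hence $t\le l(R)-1$ and lies inside the opening copies. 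Consequently $\gamma\circ R$ and $\gamma^*\circ R^*$ first disagree exactly at position $t$, with the same sign as $R$ versus $R^*$, giving $\gamma\circ R<\gamma^*\circ R^*$ if and only if $R<R^*$. Chaining the two steps yields $\alpha<\alpha^*$ if and only if $\alpha_\ell<\alpha_\ell^*$. I expect the positional bookkeeping of these last two paragraphs to be the only real difficulty, the essential subtlety being to check that the decisive first difference always falls within a leading straight copy and never at a glue point.
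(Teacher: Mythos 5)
Your argument is correct and matches the paper's own proof in all essentials: the first claim via $\alpha^*=\alpha_1^*\circ\alpha_2^*\circ\dots\circ\alpha_k^*$ plus Theorem~\ref{thm:uniqueness}, and the second via the same two monotonicity steps --- cancelling a common right factor by spotting the glued entry (last plus first component of $\delta$) against the plain last component at the first block where the exponents of $\alpha_\ell$ and $\alpha_\ell^*$ differ, and comparing the outer composites through the verbatim leading copy of the first $l(R)-1$ components --- merely applied to the grouping $\gamma\circ(\alpha_\ell\circ\delta)$ instead of the paper's $(\beta\circ\alpha_\ell)\circ\gamma$. The one small (and equally valid) deviation is that you push the first disagreement between $R$ and $R^*$ away from the final position via the mirror symmetry of the disagreement set under $r\mapsto l(R)+1-r$, whereas the paper gets the same conclusion from $\size{\delta}=\size{\epsilon}$.
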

 \begin{proof}
   By the last statement of Theorem~\ref{thm:monoid-size-length}, an
   irreducible factorisation of the reversal of $\alpha$ is
   $\alpha^*=\alpha_1^*\circ\alpha_2^*\circ\dots\circ\alpha_k^*$.
   Thus, by Theorem~\ref{thm:uniqueness}, if $\alpha=\alpha^*$, all
   the factors $\alpha_i$ are symmetric.

   Suppose now that $\alpha$ is asymmetric.  Let us first prove that
   for compositions $\beta$, $\gamma$ and $\delta$ with
   $l(\beta)=l(\gamma)$, we have $\beta\circ\delta<\gamma\circ\delta$
   if and only if $\beta<\gamma$.  Namely, if
   $\beta=(b_1,\dots,b_r)<\gamma=(g_1,\dots,g_r)$, then there is an
   index $j$ such that $b_j<g_j$ and $b_i=g_i$ for all $i<j$.  Since
   $\beta\circ\delta=\delta^{\odot b_1}\cdots\delta^{\odot b_r}$ and
   $\gamma\circ\delta=\delta^{\odot g_1}\cdots\delta^{\odot g_r}$, it
   suffices to compare $\delta^{\odot b_j}$ and $\delta^{\odot g_j}$.
   Let $\delta=(d_1,\dots,d_s)$, then the component with index
   $\left(\size{\delta}-1\right)b_j$ of $\delta^{\odot b_j}$, i.e.,
   its last component, equals $d_s$.  However, since $b_j<g_j$, the
   component of $\delta^{\odot g_j}$ with the same index is $d_s+d_1$,
   which is strictly greater than $d_s$.  Hence
   $\beta\circ\delta<\gamma\circ\delta$.  The converse follows by
   symmetry.

   Next, we prove that for compositions $\beta$, $\gamma$, $\delta$
   and $\epsilon$ with $l(\beta)=l(\gamma)$, $l(\delta)=l(\epsilon)$
   $\size{\delta}=\size{\epsilon}$ and $\delta\neq\epsilon$ we have
   $\beta\circ\delta<\gamma\circ\epsilon$ if and only if
   $\delta<\epsilon$.  It suffices to compare the first $r-1$
   components of $\beta\circ\delta$ and $\gamma\circ\epsilon$, which
   are $d_1,d_2,\dots,d_{r-1}$ and $e_1,e_2,\dots,e_{r-1}$
   respectively.  If $\delta=(d_1,\dots,d_r)<\epsilon=(e_1,\dots,
   e_r)$, let $j$ be minimal such that $d_j<e_j$.  Since
   $\size{\delta}=\size{\epsilon}$, the two compositions cannot differ
   only in the last component, so $j\leq r-1$, which implies
   $\beta\circ\delta<\gamma\circ\epsilon$.  Again, the converse
   follows by symmetry.

   To conclude the proof, we write
   $\alpha=\beta\circ\alpha_\ell\circ\gamma$, where $\ell$ is maximal
   such that $\alpha_\ell$ is asymmetric.  (If $\ell=1$ then
   $\beta=(1)$, if $\ell=k$ then $\gamma=(1)$.)  Then
   $\alpha^*=\beta^*\circ\alpha_\ell^*\circ\gamma$.  By the preceding
   two paragraphs, $\alpha<\alpha^*$ if and only if
   $\beta\circ\alpha_\ell<\beta^*\circ\alpha_\ell^*$, which in turn is
   the case if and only if $\alpha_\ell<\alpha_\ell^*$, as desired.
 \end{proof}

 In the following lemma we collect the facts we need about Dirichlet
 generating functions:
 \begin{lem}\label{lem:Dirichlet}
   Let $A$ and $B$ be sets of compositions, let $A\disjointUnion B$ be
   their disjoint union and define $A\circ
   B:=\{\alpha\circ\beta:\alpha\in A, \beta\in B\}$.  For any set of
   compositions $A$, let $A(s)=\sum_{\alpha\in A} \size{\alpha}^{-s}$
   the associated Dirichlet generating function.  Then
   \begin{align*}
     (A\disjointUnion B)(s)&=A(s)+B(s)\quad\text{and}\\
     (A\circ B)(s)&=A(s)B(s).
   \end{align*}
   The latter equality is equivalent to the statement, that the
   coefficient of $n^{-s}$ in $(A\circ B)(s)$ is $a_n*b_n$, where
   $a_n$ and $b_n$ are the coefficients of $n^{-s}$ in $A(s)$ and
   $B(s)$ respectively, and $a_n*b_n$ denotes the Dirichlet
   convolution $\sum_{d|n} a_d b_{n/d}$.
 \end{lem}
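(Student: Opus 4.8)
The plan is to handle the two identities separately. The first, $(A\disjointUnion B)(s)=A(s)+B(s)$, is immediate: since $A\disjointUnion B$ is a disjoint union, the defining sum $\sum_{\gamma\in A\disjointUnion B}\size{\gamma}^{-s}$ splits into the part indexed by $A$ and the part indexed by $B$, giving $A(s)+B(s)$ with no further work.

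For the second identity the plan is to re-index the sum $(A\circ B)(s)=\sum_{\gamma\in A\circ B}\size{\gamma}^{-s}$ over pairs $(\alpha,\beta)\in A\times B$ through the map $(\alpha,\beta)\mapsto\alpha\circ\beta$. Granting for the moment that this map is a bijection from $A\times B$ onto $A\circ B$, I would invoke the multiplicativity of size from Theorem~\ref{thm:monoid-size-length}, namely $\size{\alpha\circ\beta}=\size{\alpha}\size{\beta}$, to rewrite
\[
  (A\circ B)(s)=\sum_{\alpha\in A}\sum_{\beta\in B}\bigl(\size{\alpha}\size{\beta}\bigr)^{-s}
  =\Bigl(\sum_{\alpha\in A}\size{\alpha}^{-s}\Bigr)\Bigl(\sum_{\beta\in B}\size{\beta}^{-s}\Bigr)
  =A(s)B(s).
\]
The step that actually requires proof, and which I expect to be the main obstacle, is the injectivity of the composition map on $A\times B$: one must rule out distinct pairs $(\alpha,\beta)\neq(\alpha',\beta')$ with $\alpha\circ\beta=\alpha'\circ\beta'$. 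Here I would appeal to the uniqueness of irreducible factorisation (Theorem~\ref{thm:uniqueness}). The delicate point is that concatenating the irreducible factorisations of $\alpha$ and $\beta$ need not yield the irreducible factorisation of $\alpha\circ\beta$, since the factorisation at the junction may be trivial (for instance when a factor equals $(1)$, or when the adjacent factors both have all components equal to $1$); one must therefore restrict attention to sets $A$ and $B$ for which such degeneracies cannot occur, as is the case in the intended applications where the factors are irreducible compositions.

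Finally, the stated equivalence with Dirichlet convolution is the standard coefficient computation for a product of Dirichlet series. Writing $A(s)=\sum_m a_m m^{-s}$ and $B(s)=\sum_k b_k k^{-s}$, I would expand
\[
  A(s)B(s)=\sum_{m,k} a_m b_k (mk)^{-s}
  =\sum_{n\ge 1}\Bigl(\sum_{d\mid n} a_d b_{n/d}\Bigr) n^{-s},
\]
grouping the terms according to $n=mk$. Since the coefficient of $n^{-s}$ in $(A\circ B)(s)$ counts the compositions in $A\circ B$ of size $n$, which under the bijection above is exactly the number of pairs $(\alpha,\beta)$ with $\size{\alpha}\size{\beta}=n$, the identity $(A\circ B)(s)=A(s)B(s)$ is equivalent to this coefficient being the Dirichlet convolution $a_n*b_n$, as claimed.
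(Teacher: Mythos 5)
Your argument is correct, and in fact it supplies more than the paper does: the paper states Lemma~\ref{lem:Dirichlet} without any proof at all, only remarking afterwards that such statements are elementary and in the spirit of the Maia--M\'endez theory of Dirichlet species. Your treatment of the disjoint-union identity, the re-indexing of $(A\circ B)(s)$ over $A\times B$ via multiplicativity of size (Theorem~\ref{thm:monoid-size-length}), and the coefficient computation identifying the product of Dirichlet series with Dirichlet convolution are exactly the standard arguments one would write out. More importantly, the obstacle you single out --- injectivity of $(\alpha,\beta)\mapsto\alpha\circ\beta$ on $A\times B$ --- is genuinely present and is glossed over in the paper: as literally stated for arbitrary sets of compositions the product identity is false. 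For instance, with $A=B=\{(2),(3)\}$ one has $(2)\circ(3)=(3)\circ(2)=(6)$, so $(A\circ B)(s)=4^{-s}+6^{-s}+9^{-s}$ while $A(s)B(s)=4^{-s}+2\cdot 6^{-s}+9^{-s}$; the lemma must be read either with $A\circ B$ taken as a multiset or under the implicit hypothesis that the composition map is injective. Your resolution matches how the paper actually uses the lemma: in the applications $L^\times=C\circ R^1$, $R^1=P^\times\circ S$ and $R=S\disjointUnion(R\circ R^1)$, the required uniqueness of the splitting is exactly what the proof of Theorem~\ref{thm:size} asserts (\lq\lq can be written in a unique way\rq\rq), justified by the unique irreducible factorisation of Theorem~\ref{thm:uniqueness}. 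So your proposal is not merely adequate; it makes explicit a hypothesis the paper leaves tacit, and you correctly locate where that hypothesis is discharged in the intended applications.
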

 \begin{rmk}
   A full-fledged combinatorial theory of Dirichlet series within the
   theory of combinatorial species was developed by Manuel Maia and
   Miguel M\'endez~\cite{MR2459362}.  Although the proofs below are
   written in the spirit of that theory, they are quite elementary.
 \end{rmk}

 \begin{thm}\label{thm:size}
   The number of normalised compositions of size $n$ is
   $$ 
   2\cdot 2^{n-1} %
   * 2^{\lfloor \frac{n}{2}\rfloor} %
   * \left(2^{n-1}+%
     2^{\lfloor\frac{n}{2}\rfloor}\right)^{-1},
   $$
   where $a_n*b_n$ denotes the Dirichlet convolution, and the
   reciprocal is the inverse with respect to Dirichlet convolution.
 \end{thm}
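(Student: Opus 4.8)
The plan is to compute the Dirichlet generating function $N(s)=\sum_{\alpha}\size{\alpha}^{-s}$ of the set of normalised compositions and to show it equals $2\,C(s)\,D(s)\bigl(C(s)+D(s)\bigr)^{-1}$, where $C(s)=\sum_{n\ge 1}2^{n-1}n^{-s}$ enumerates all compositions and $D(s)=\sum_{n\ge 1}2^{\lfloor n/2\rfloor}n^{-s}$ enumerates the symmetric ones. Both coefficient counts are elementary: a composition of $n$ is recorded by its set of cut points in $\{1,\dots,n-1\}$, giving $2^{n-1}$ compositions, and a symmetric composition corresponds to a cut-point set invariant under $s\mapsto n-s$; since that involution has $\lfloor n/2\rfloor$ orbits there are $2^{\lfloor n/2\rfloor}$ such. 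By Lemma~\ref{lem:Dirichlet} it then suffices to produce size-preserving set decompositions whose generating functions assemble into the claimed identity.

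First I would decompose all compositions. By Lemma~\ref{lem:lexmin} a composition is symmetric exactly when every factor of its irreducible factorisation is symmetric, and as a symmetric factor is lexicographic minimal, every symmetric composition is already normalised. An asymmetric composition $\alpha$ possesses a well-defined last asymmetric factor $\alpha_\ell$, and Lemma~\ref{lem:lexmin} ensures that all factors after it are symmetric; hence $\alpha$ can be written uniquely as $\alpha=\beta\circ\alpha_\ell\circ\gamma$ with $\beta$ an arbitrary composition, $\alpha_\ell$ an asymmetric irreducible, and $\gamma$ symmetric. The decisive point is that this is a size-preserving bijection onto the asymmetric compositions: because $\alpha_\ell$ is irreducible it has length greater than one and does not have all components equal to one, so it forms a trivial factorisation with neither neighbour, and Theorem~\ref{thm:uniqueness} lets me recover $\beta$, $\alpha_\ell$ and $\gamma$ unambiguously. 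Writing $I(s)$ for the generating function of the asymmetric irreducibles, Lemma~\ref{lem:Dirichlet} converts this bijection into $C(s)-D(s)=C(s)\,I(s)\,D(s)$, so that $I(s)=D(s)^{-1}-C(s)^{-1}$.

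Next I would run the same decomposition for normalised compositions. Reversal is a fixed-point-free involution on the asymmetric irreducibles --- by the last assertion of Theorem~\ref{thm:monoid-size-length} it preserves irreducibility, and it fixes no asymmetric composition --- so exactly one element of each pair $\{\iota,\iota^{*}\}$ is lexicographic minimal, and the lexicographic-minimal asymmetric irreducibles have generating function $\tfrac12 I(s)$. Under the bijection above an asymmetric composition is normalised precisely when $\beta$ is normalised and $\alpha_\ell$ is lexicographic minimal, while $\gamma$, being symmetric, is automatically normalised. Together with $N_{\mathrm{sym}}(s)=D(s)$ this yields
\begin{equation*}
N(s)=D(s)+\tfrac12\,N(s)\,I(s)\,D(s).
\end{equation*}
Solving for $N(s)$ and inserting $I(s)=D(s)^{-1}-C(s)^{-1}$ gives $1-\tfrac12 I(s)D(s)=\tfrac12\bigl(C(s)+D(s)\bigr)C(s)^{-1}$, whence $N(s)=2\,C(s)\,D(s)\bigl(C(s)+D(s)\bigr)^{-1}$, exactly the asserted formula.

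I expect the main obstacle to be the bijection of the second paragraph: one must check that composing $\beta$, $\alpha_\ell$ and $\gamma$ neither merges the boundary factors nor creates a new trivial factorisation, and that the triple is uniquely determined by $\alpha$. This is precisely the bookkeeping that Lemma~\ref{lem:lexmin} and the uniqueness of irreducible factorisations in Theorem~\ref{thm:uniqueness} are set up to handle; once the bijection is secured, the remainder is a short manipulation of Dirichlet series supplied by Lemma~\ref{lem:Dirichlet}.
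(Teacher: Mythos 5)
Your proposal is correct and takes essentially the same route as the paper: the identical decomposition of a normalised composition into a normalised composition, a lexicographically minimal asymmetric irreducible factor and a symmetric composition, yielding the same recursion and the same identity $R(s)=2C(s)S(s)/\left(C(s)+S(s)\right)$. The only cosmetic difference is that you obtain the factor $\tfrac12$ by pairing each asymmetric irreducible with its reversal, whereas the paper halves at the level of whole compositions via $2L^\times=C\setminus S$ and $L^\times=C\circ R^1$, which uses the second half of Lemma~\ref{lem:lexmin}; your variant needs only its first statement together with Theorem~\ref{thm:uniqueness}.
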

 \begin{rmk}
   Thus, the numbers of ribbon Schur functions of size $1$ to $33$
   turn out to be:
   \begin{align*}
     & 1, 2, 3, 6, 10, 20, 36, 72, 135, 272, 528, 1052, 2080, 4160,
     8244, 16508, 32896, 65770, \\
     & 131328, 262632, 524744, 1049600, 2098176, 4196200, 8390620,
     16781312, 33558291, \\
     & 67116944, 134225920, 268451240, 536887296, 1073774376,
     2147515424.
   \end{align*}
   (This is sequence \url{http://oeis.org/A120421} in the on-line
   encyclopedia of integer sequences~\cite{OEIS}.\footnote{Be warned
     that the 18\textsuperscript{th} term in the encyclopedia is in
     error, it reads 65768 instead of 65770.})

   It may be interesting to compare the number of ribbon Schur
   functions with the number of lexicographic minimal compositions.
   Since $|\alpha\circ\beta|=|\alpha|\cdot|\beta|$, it is clear that
   the numbers coincide when $n$ is prime.  For $n=9$, there are $136$
   lexicographic minimal compositions, but two of them are equivalent.
   Here are the differences and their positions up to $n=33$:
   \begin{equation*}
   \begin{array}[h]{lrrrrrrrrrrrrrrrrrrrrrrrrrrrr}
     n:&9&12&15&16&18&20&21&24&25&27&28&30&32&33\\
     \text{difference}:&1& 4& 12& 4& 22& 24& 56& 152& 36& 237& 112& 600& 216& 992\\
   \end{array}
   \end{equation*}
 \end{rmk}

 \begin{proof}
   Let $R$ be the set of normalised compositions.  Let $S$ be the set
   of symmetric compositions, $P^\times$ be the set of (normalised)
   asymmetric irreducible compositions and
   \begin{equation}
     \label{eq:R1}
     R^1=P^\times\circ S,
   \end{equation}
   i.e., the set of (normalised) compositions whose first factor in
   the irreducible factorisation is asymmetric, and all remaining
   factors (if any) are symmetric.  We can then decompose the set of
   normalised compositions recursively as
   \begin{equation}
     \label{eq:R}
     R=S\disjointUnion (R\circ R^1),
   \end{equation}
   since a normalised composition is either symmetric, or can be
   written in a unique way as a product of a normalised composition,
   an asymmetric irreducible factor and a symmetric composition.

   The set $R^1$ can be described in terms of the set of all
   compositions $C$ and the set of asymmetric lexicographic minimal
   compositions $L^\times$ by Lemma~\ref{lem:lexmin}.  Namely,
   \begin{equation}
     \label{eq:R1b}
     L^\times=C\circ R^1,
   \end{equation}
   since an asymmetric composition is lexicographic minimal, if and
   only if the last asymmetric factor in its irreducible factorisation
   is lexicographic minimal.

   Finally, we have (again by Lemma~\ref{lem:lexmin}) 
   $$
   2 L^\times=C\setminus S,
   $$
   where $2 L^\times$ is interpreted as the set of asymmetric compositions
   whose last asymmetric factor is either lexicographic minimal or
   lexicographic maximal.

   We can now apply Lemma~\ref{lem:Dirichlet} to obtain the Dirichlet
   generating function for the set of normalised compositions.  We
   have
   \begin{align*}
     L^\times(s)&=1/2\left(C(s)-S(s)\right)\\
     R^1(s)&=L^\times(s)/C(s)
     \intertext{and therefore}
     R(s)&=\frac{S(s)}{1-R^1(s)}\\
         &=\frac{2C(s)S(s)}{2C(s)-\left(C(s)-S(s)\right)}\\
         &=\frac{2C(s)S(s)}{C(s)+S(s)}.
   \end{align*}
   Since $C(s)=\sum_{n\geq1} 2^{n-1} n^{-s}$ and $S(s)=\sum_{n\geq1}
   2^{\lfloor\frac{n}{2}\rfloor} n^{-s}$, the claim follows.
 \end{proof}

 \begin{rmk}
   It is not difficult to obtain more information using the preceding
   theorem and the decompositions in its proof.  In particular, we can
   easily refine the count of normalised compositions by taking into
   account the number of asymmetric irreducible factors.  Denoting the
   number of asymmetric irreducible factors of a composition $\rho$ by
   $\alpha(\rho)$ and defining $R(s, z)=\sum_{\rho\in R} |\rho|^{-s}
   z^{\alpha(s)}$, we find
   \begin{equation*}
     R(s,z)=\frac{S(s)}{1-zR^1(s)}=\frac{2C(s)S(s)}{2C(s)-z\left(C(s)-S(s)\right)}.
   \end{equation*}
 \end{rmk}
 Perhaps more interesting, we can determine the generating function
 for irreducible compositions by size using the following proposition:
 \begin{prop}
   Let $P(s)$ be the Dirichlet generating function for (normalised)
   irreducible compositions, $P^*(s)$ the Dirichlet generating
   function for symmetric irreducible compositions and $R(s)$ the
   Dirichlet generating function for normalised compositions by size.

   Furthermore, let $S(s)=\sum_{n\geq1} 2^{\lfloor\frac{n}{2}\rfloor}
   n^{-s}$ be the Dirichlet generating function of symmetric
   compositions, and $\zeta(s)=\sum_{n\geq1}n^{-s}$ the Riemann zeta
   function.  We then have
   \begin{align}\label{eq:1}
     P(s)   &= 2\zeta^{-1}(s) - 1 - R^{-1}(s)\\
     \label{eq:2}
     P^*(s) &= 2\zeta^{-1}(s) - 1 - S^{-1}(s)
     \intertext{and}\label{eq:3}
     P^\times(s) &= S^{-1}(s)-R^{-1}(s).
   \end{align}
 \end{prop}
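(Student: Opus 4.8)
The plan is to exploit the unique irreducible factorisation (Theorem~\ref{thm:uniqueness}) to build free-monoid-style Dirichlet identities, exactly as the size theorem did. The key observation is that every normalised composition factors uniquely as a product of normalised irreducible factors, and these factors fall into two classes: symmetric irreducibles (counted by $P^*(s)$) and asymmetric normalised irreducibles (counted by $P^\times(s)$). Because $\circ$ multiplies sizes and the irreducible factorisation is unique, each class generates a free monoid under $\circ$, whose Dirichlet generating function is a geometric series. Concretely, I would first establish the two bookkeeping facts $P(s)=P^*(s)+P^\times(s)$ (every irreducible is either symmetric or asymmetric-normalised, a disjoint union, so I apply the additive part of Lemma~\ref{lem:Dirichlet}) and, from the decomposition $R=S\disjointUnion(R\circ R^1)$ together with $R^1=P^\times\circ S$ in the proof of Theorem~\ref{thm:size}, the identity $R(s)=S(s)/(1-R^1(s))$ with $R^1(s)=P^\times(s)S(s)$.

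First I would handle the symmetric side, which yields \eqref{eq:2}. The set $S$ of symmetric compositions is exactly the free monoid on the symmetric irreducibles together with the trivial factors; by Lemma~\ref{lem:lexmin} a composition is symmetric iff all its irreducible factors are symmetric, so $S$ is freely generated under $\circ$ by the symmetric irreducibles. This gives a geometric-series relation $S(s)=1/(1-(P^*(s)+\text{(trivial corrections)}))$. The subtlety is accounting for the trivial factorisations in the definition of irreducibility: the compositions $(1)$ and the all-ones compositions $(1^n)$ behave specially, so the "free monoid" is free only after one carefully records the contribution of the generator $\zeta$-type terms. I expect the bookkeeping to collapse to $S^{-1}(s)=2\zeta^{-1}(s)-1-P^*(s)$, which rearranges to \eqref{eq:2}; equivalently $S(s)=1/(2\zeta^{-1}(s)-1-P^*(s))$.

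Next I would treat the full set $R$ of normalised compositions to obtain \eqref{eq:1}. The structure is identical: $R$ is generated under $\circ$ by \emph{all} normalised irreducibles (both symmetric and asymmetric-normalised), subject to the same trivial-factor corrections, so the same geometric-series argument gives $R^{-1}(s)=2\zeta^{-1}(s)-1-P(s)$, which is exactly \eqref{eq:1}. Finally, \eqref{eq:3} follows by pure algebra: subtracting \eqref{eq:1} from \eqref{eq:2} gives $P^*(s)-P(s)=S^{-1}(s)-R^{-1}(s)$, and since $P(s)=P^*(s)+P^\times(s)$ we get $-P^\times(s)=S^{-1}(s)-R^{-1}(s)$; I would double-check the sign convention, as the intended identity is $P^\times(s)=S^{-1}(s)-R^{-1}(s)$, which forces the relation $P(s)=P^*(s)-P^\times(s)$ rather than a sum --- reflecting that asymmetric irreducibles are counted once among normalised compositions but contribute in pairs $\{\alpha,\alpha^*\}$ to the ambient count.

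The main obstacle will be getting the trivial-factor bookkeeping exactly right so that the constant and $\zeta^{-1}$ terms emerge correctly. The three types of trivial factorisation in the definition --- a factor equal to $(1)$, two length-one factors, and two all-ones factors --- mean that the naive free-monoid count overcounts, and the correction terms are precisely what produce the $2\zeta^{-1}(s)-1$ prefactor. I would pin these down by comparing against the already-proved formula $R(s)=2C(s)S(s)/(C(s)+S(s))$ from Theorem~\ref{thm:size}: substituting $C(s)=\sum 2^{n-1}n^{-s}$ and noting $C(s)=1/(2\zeta^{-1}(s)-1)$ as a consistency check should confirm that $R^{-1}(s)=2\zeta^{-1}(s)-1-P(s)$ has the right normalisation, and resolve the sign in \eqref{eq:3}.
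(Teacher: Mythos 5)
Your overall skeleton --- view irreducible factorisations as words in irreducible letters plus the special letters $(n)$ and $(1^n)$, and extract $P$ from a geometric-series-type identity --- is indeed the paper's approach, and your preparatory identities $P(s)=P^*(s)+P^\times(s)$, $R^1(s)=P^\times(s)S(s)$ and $R(s)=S(s)/\bigl(1-R^1(s)\bigr)$ are all correct. But two steps as written are wrong. First, the concluding algebra: subtracting \eqref{eq:1} from \eqref{eq:2} gives $P^*(s)-P(s)=R^{-1}(s)-S^{-1}(s)$, \emph{not} $S^{-1}(s)-R^{-1}(s)$; you dropped a sign and then ``repaired'' the resulting contradiction by replacing your correct relation $P=P^*+P^\times$ with the false $P=P^*-P^\times$. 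The latter cannot hold: since $P^*\subseteq P$ it would force $P^\times$ to have nonpositive coefficients, and since $P$ counts \emph{normalised} irreducibles, each asymmetric pair $\{\alpha,\alpha^*\}$ contributes exactly one element, so as sets $P$ really is the disjoint union of $P^*$ and $P^\times$. With the subtraction done correctly, $P-P^*=P^\times=S^{-1}-R^{-1}$ drops out with no tension at all. Simpler still --- and this is the paper's actual route to \eqref{eq:3} --- you already wrote down everything needed for a direct proof: $P^\times(s)=R^1(s)/S(s)=\bigl(1-S(s)/R(s)\bigr)/S(s)=S^{-1}(s)-R^{-1}(s)$.

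Second, the ``trivial-factor bookkeeping'' that you defer and then propose to settle by a consistency check is the entire content of \eqref{eq:1}, and your proposed check rests on a false identity. Because the letters $(n)$ (length one, the set $K$) and $(1^n)$ (all ones, the set $E$) may occur as factors in an irreducible factorisation, but never two of the same kind consecutively, the relevant monoid is \emph{not} free on these letters and no plain geometric series applies. The paper resolves this with the auxiliary set $R_E$ of normalised compositions with no $K$-factors, yielding $R_E(s)=E(s)+\bigl(1+E(s)\bigr)P(s)\bigl(1+R_E(s)\bigr)$ and $R(s)=\bigl(1+K(s)\bigr)\bigl(1+R_E(s)\bigr)+K(s)R_E(s)R(s)$; with $E(s)=K(s)=\zeta(s)-1$ these solve to $R(s)=\zeta^2(s)\big/\bigl(1-(\zeta(s)-1)^2-\zeta^2(s)P(s)\bigr)$, which is \eqref{eq:1}. (Your direct attack on \eqref{eq:2} is viable in principle: by Lemma~\ref{lem:lexmin} the same decomposition works verbatim inside $S$ with $P^*$ in place of $P$; the paper instead deduces \eqref{eq:2} from \eqref{eq:1} and \eqref{eq:3}.) Your consistency check, however, cannot substitute for this decomposition, because $C(s)\neq 1/\bigl(2\zeta^{-1}(s)-1\bigr)$: at $n=3$ the right-hand side has coefficient $2$, whereas $C$ has $2^{2}=4$. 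In fact, running the alternating decomposition on \emph{all} compositions gives $C^{-1}(s)=2\zeta^{-1}(s)-1-P^*(s)-2P^\times(s)$ (each asymmetric irreducible now appears in both orientations), which is consistent with \eqref{eq:1}--\eqref{eq:3} via $R^{-1}(s)=\tfrac12\bigl(C^{-1}(s)+S^{-1}(s)\bigr)$ from Theorem~\ref{thm:size}; the nonvanishing $P$-terms there are precisely what the check, as you set it up, would have overlooked.
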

 \begin{rmk}
   Thus, the numbers of (normalised) irreducible compositions of size
   $1$ to $33$ are:
   \begin{align*}
     & 0,0,1,2,8,10,34,56,126,234,526,972,2078,4018,8186,16240,32894,65164, \\
     & 131326,261544,524530,1047490,2098174,4191680,8390520,16772994,33557508, \\
     & 67100304,134225918,268416590,536887294,1073708400,2147512258.
   \end{align*}
   Note that, whenever $n$ is prime, there are precisely two
   normalised compositions (or, equivalently, lexicographic minimal
   compositions) that are not irreducible, namely the composition with
   all components equal to $1$ and the composition $(n)$.

   For $n=4$, the irreducible normalised compositions are $(1,3)$ and
   $(1,1,2)$.  For $n=6$, they are $(1,5)$, $(1,1,4)$, $(1,4,1)$,
   $(1,2,3)$, $(2,1,3)$, $(1,1,1,3)$, $(1,1,2,2)$, $(1,1,3,1)$,
   $(2,1,1,2)$, $(1,1,1,1,2)$.
 \end{rmk}
 \begin{proof}
   Let $E$ be the set of compositions with all components equal to
   $1$, and $K$ be the set of compositions with only one component.
   Let $R$ be the set of all normalised compositions, and $R_E$ be the
   set of normalised compositions with no factors in the irreducible
   factorisation having only one component, i.e., all factors being
   irreducible or having all components equal to $1$.  Finally, let
   $P$ be the set of (normalised) irreducible compositions.

   By Theorem~\ref{thm:uniqueness}, $R_E$ is the disjoint union of the
   sets $E$, $E\circ P$, $P\circ R_E$ and $E\circ P\circ R_E$.
   Passing to (Dirichlet) generating functions, we obtain
   $$
   R_E(s) = E(s)+\big(1+E(s)\big)P(s)\big(1+R_E(s)\big).
   $$

   Similarly, $R$ is the disjoint union of the composition $(1)$, and
   the sets $K$, $R_E$, $K\circ R_E$ and $K\circ R_E\circ R$.  Hence
   $$
   R(s)=\big(1+K(s)\big)\big(1+R_E(s)\big)+K(s)R_E(s)R(s).
   $$
   Extracting $P(s)$ and observing $E(s)=K(s)=\zeta(s)-1$ we obtain
   Equation~\eqref{eq:1}.  Equation~\eqref{eq:3} can be derived by
   combining Equations~\eqref{eq:R1} and~\eqref{eq:R}.
   Equation~\eqref{eq:2} then follows from Equations~\eqref{eq:1}
   and~\eqref{eq:3}.
 \end{proof}
 \section{The number of ribbon Schur functions of given size and
   length}
 \label{sec:length}

 Apart from the size of a composition, the most natural statistic that
 comes to mind is its length.  In this section we derive an expression
 for the number of normalised compositions with given size and given
 length.

 By \ref{thm:monoid-size-length}, it is possible to determine the
 length of a composition of compositions, knowing the size and the
 length of the factors.  However, since the length of a composition of
 compositions is neither multiplicative or additive, we cannot expect
 a result as appealing as in Theorem~\ref{thm:size}.

 Let us first collect some elementary results:
 \begin{prop}
   Let $C_n(x)=\sum_{\alpha\in C, |\alpha|=n} x^{l(\alpha)}$ be the
   ordinary generating function of all compositions of size $n$, where
   $x$ marks length.  Similarly, let $S_n(x)=\sum_{\alpha\in S,
     |\alpha|=n} x^{l(\alpha)}$ the generating function of symmetric
   compositions, and $L^\times_n(x)=\sum_{\alpha\in L^\times,
     |\alpha|=n} x^{l(\alpha)}$ the generating function of asymmetric
   lexicographic minimal compositions.  Then
   \begin{align}
     C_n(x)&=x(1+x)^{n-1},\tag{\url{http://oeis.org/A007318}}\\
     S_n(x)&=
     \begin{cases}
       x(1+x)(1+x^2)^{(n-2)/2} &\text{$n$ even}\\
       x(1+x^2)^{(n-1)/2} &\text{$n$ odd}, 
     \end{cases}\tag{\url{http://oeis.org/A051159}}\\
     L^\times_n(x)&=1/2\left(C_n(x)-S_n(x)\right).
     \tag{\url{http://oeis.org/A034852}}
   \end{align}
 \end{prop}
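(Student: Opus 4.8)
The plan is to prove the three formulas by direct combinatorial counting, treating each separately and reading off the length generating function from an explicit description of the objects involved.

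First I would handle $C_n(x)$ using the standard encoding of a composition by its set of partial sums: a composition $(a_1,\dots,a_k)\vDash n$ corresponds bijectively to the subset $\{a_1,\,a_1{+}a_2,\,\dots,\,a_1{+}\dots{+}a_{k-1}\}$ of $\{1,\dots,n-1\}$, so that compositions of length $k$ are in bijection with the $(k-1)$-element subsets. Hence the number of compositions of $n$ of length $k$ is $\binom{n-1}{k-1}$, and summing $\binom{n-1}{k-1}x^{k}$ over $k$ and applying the binomial theorem gives $C_n(x)=x(1+x)^{n-1}$.

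Next I would count symmetric compositions by folding them at the centre. A symmetric composition of even length $2m$ is determined by its first half $(a_1,\dots,a_m)$, which is an arbitrary composition of $\size{\alpha}/2$ into $m$ positive parts; a symmetric composition of odd length $2m+1$ is determined by a prefix $(a_1,\dots,a_m)$ of some sum $s$ together with a central part, which is forced to equal $\size{\alpha}-2s$ and must be a positive integer (so $s$ runs over the values with $2s<\size{\alpha}$ of the appropriate parity). Using that the number of compositions of $s$ into $m$ positive parts is $\binom{s-1}{m-1}$, the count of symmetric compositions of each fixed length becomes a sum $\sum_{s}\binom{s-1}{m-1}$ over the admissible range, which the hockey-stick identity collapses to a single binomial coefficient. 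Splitting according to the parity of $n$, assembling the contributions over all lengths, and factoring the resulting polynomial yields the two cases of $S_n(x)$. Keeping track of the parity constraints and the exact range of $s$ is the only delicate bookkeeping, so this is the step where I expect the main effort to lie.

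Finally, $L^\times_n(x)$ follows with no further computation from the involution $\alpha\mapsto\alpha^{*}$. Since reversal preserves both size and length and fixes exactly the symmetric compositions, it partitions the asymmetric compositions of size $n$ into two-element orbits $\{\alpha,\alpha^{*}\}$, each orbit containing exactly one lexicographic minimal element (the smaller of the two in lexicographic order), with both members of the orbit sharing the same length. Therefore the length generating function of the asymmetric lexicographic minimal compositions is exactly half that of all asymmetric compositions of size $n$, that is $\tfrac12\bigl(C_n(x)-S_n(x)\bigr)$, which is the asserted formula.
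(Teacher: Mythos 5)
Your proposal is correct, and it is worth noting that the paper itself gives \emph{no} proof of this proposition: it is introduced with ``let us first collect some elementary results,'' and the OEIS references stand in for a proof. Your write-up therefore supplies exactly the standard arguments the author is implicitly invoking, and all three are sound: the partial-sums bijection gives $\binom{n-1}{k-1}$ compositions of $n$ of length $k$, hence $C_n(x)=x(1+x)^{n-1}$; folding a symmetric composition at its centre gives $\binom{n/2-1}{m-1}$ compositions of even length $2m$ (only when $n$ is even, which your split by the parity of $n$ handles implicitly) and, via the hockey-stick identity, $\binom{\lfloor (n-1)/2\rfloor}{m}$ of odd length $2m+1$, which assembles to $x(1+x^2)^{(n-1)/2}$ for odd $n$ and to $x^2(1+x^2)^{(n-2)/2}+x(1+x^2)^{(n-2)/2}=x(1+x)(1+x^2)^{(n-2)/2}$ for even $n$; and the reversal involution, which preserves size and length and fixes exactly the symmetric compositions, pairs the asymmetric ones into orbits $\{\alpha,\alpha^*\}$ each containing exactly one lexicographically minimal element, giving $L^\times_n(x)=\tfrac12\bigl(C_n(x)-S_n(x)\bigr)$. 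One small correction in the odd-length case: there is no parity condition on the half-sum $s$. The central part $n-2s$ automatically has the same parity as $n$, and the only constraint is positivity, i.e.\ $m\le s\le\lfloor (n-1)/2\rfloor$; if a parity restriction on $s$ were actually imposed, the hockey-stick sum would skip every other term and the count would come out wrong, so the phrase ``of the appropriate parity'' should simply be deleted --- the computation you sketch only closes up without it.
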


 \begin{thm}
   Let $R_n(x)=\sum_{\rho\in R, |\rho|=n} x^{l(\rho)}$ be the ordinary
   generating function of normalised compositions of size $n$, where
   $x$ marks length.  Similarly, let $R^1_n(x)=\sum_{\rho\in R^1,
     |\rho|=n} x^{l(\rho)}$ be the ordinary generating function of
   (normalised) compositions whose first factor in the irreducible
   factorisation is asymmetric, and all remaining factors (if any) are
   symmetric.  Then we have
   \begin{align}\label{eq:R1solution}
     R^1_n(x)&=\sum_{\substack{k\geq 0\\ %
         1=d_0|d_1|\dots|d_k|n \\ %
         \text{\shrink{$d_i\neq d_{i+1}$ for
             $i\in\{0,\dots,k-1\}$}}}}%
     (-1)^k %
     L^\times_{n/d_k}(x^{d_k}) %
     \prod_{i=0}^{k-1} C_{d_{i+1}/d_i}(x^{d_i})/x^{d_i}
     \intertext{and}\label{eq:Rsolution}%
     R_n(x)&=\sum_{\substack{k\geq0 \\ %
         d_1|d_2|\dots|d_{k+1}=n \\ %
         \text{\shrink{$d_i\neq d_{i+1}$ for $i\in\{1,\dots,k\}$}}}}%
     S_{d_1}(x) %
     \prod_{i=1}^k R^1_{d_{i+1}/d_i}(x^{d_i})/x^{d_i}.
   \end{align}
 \end{thm}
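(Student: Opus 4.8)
The plan is to refine the Dirichlet calculus of Lemma~\ref{lem:Dirichlet} by the length statistic, turning its ordinary (commutative) convolution into a non-commutative one, and then to rerun the two inversions that produced $R^1(s)$ and $R(s)$ in the proof of Theorem~\ref{thm:size}. The engine is a single convolution rule: if $A$ and $B$ are sets of compositions for which $A\circ B$ is a disjoint union — which, by the unique factorisation of Theorem~\ref{thm:uniqueness}, holds for every product formed below, so that $A\mapsto\bigl(A_n(x)\bigr)_n$ is a homomorphism onto this convolution — then
\[
  (A\circ B)_n(x)=\sum_{ab=n} x^{-a}\,A_a(x)\,B_b\!\left(x^{a}\right).
\]
This is immediate from the length formula $l(\alpha\circ\beta)=l(\alpha)+\size{\alpha}\bigl(l(\beta)-1\bigr)$ of Theorem~\ref{thm:monoid-size-length}: with $a=\size{\alpha}$ and $b=\size{\beta}$ one has $x^{l(\alpha\circ\beta)}=x^{l(\alpha)}\bigl(x^{a}\bigr)^{l(\beta)}x^{-a}$, and summing over $\alpha$ of size $a$ and $\beta$ of size $b$ produces $A_a(x)$, the substituted factor $B_b(x^{a})$, and the prefactor $x^{-a}$. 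The crucial feature, absent in the size-only setting, is non-commutativity: the size of the \emph{outer} factor is fed into the \emph{inner} factor through the substitution $x\mapsto x^{a}$.

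First I would iterate this rule. Expanding a product $f_1\circ\dots\circ f_k$ and recording the cumulative sizes as a chain of divisors gives, by induction on $k$, a nested-substitution formula in which the $i$-th factor is evaluated at $x^{d_i}$ (where $d_i$ is the product of the sizes of the preceding factors) and weighted by $x^{-d_i}$, while a single factor of $x$ is left over from the base of the recursion. This leftover $x$ is exactly what is absorbed by the \emph{outermost} factor, which must consequently be recorded un-reduced, i.e. retaining its own factor of $x$, whereas every inner factor enters divided by the appropriate $x^{d_i}$. This is precisely the divisor-chain structure, with the visible substitutions $x^{d_i}$, appearing in~\eqref{eq:R1solution} and~\eqref{eq:Rsolution}.

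It then remains to feed the two set identities already established in Theorem~\ref{thm:size} through this homomorphism. For~\eqref{eq:R1solution} I would invert $L^\times=C\circ R^1$ from~\eqref{eq:R1b}. Since $C$ and the convolution identity agree in size $1$ (both contribute $x$), writing $C=I+(C-I)$ with $C-I$ supported on sizes $\geq2$ makes $C^{-1}=\sum_{k\ge0}(-1)^k(C-I)^{\circ k}$ a convergent two-sided inverse, because $(C-I)^{\circ k}$ is supported on sizes $\geq 2^{k}$; substituting $R^1=C^{-1}\circ L^\times$ and expanding each $\circ$-power by the nested-substitution formula yields the alternating sum over chains $1=d_0\mid d_1\mid\dots\mid d_k\mid n$, with the $C$-factors and a terminal $L^\times$-factor, the precise $x$-normalisation of the endpoints being exactly the delicate point flagged below. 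For~\eqref{eq:Rsolution} I would read the recursion $R=S\disjointUnion(R\circ R^1)$ of~\eqref{eq:R} as the fixed-point equation $R\circ(I-R^1)=S$, so that $R=S\circ(I-R^1)^{-1}=\sum_{k\ge0}S\circ (R^1)^{\circ k}$; this converges since $R^1$ is supported on sizes $\geq 3$, and here $S$ is the outermost factor and so appears un-reduced as $S_{d_1}(x)$.

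The mathematical content is light once the convolution rule is in hand; the one genuine obstacle, and the only place an error can hide, is the bookkeeping of the powers of $x$ under the iterated substitutions $x\mapsto x^{d_i}$ — in particular pinning down which single factor absorbs the leftover $x$ (the outermost one, kept un-reduced) and confirming that each remaining factor, including the endpoint coming from $L^\times$ and the internal $R^1$-factors, is normalised by the correct $x^{-d_i}$. To guard against an off-by-$x^{d_k}$ slip at the endpoints I would test the resulting polynomials against values computed directly for small $n$, using $C_n(x)=x(1+x)^{n-1}$, the displayed $S_n(x)$, and $L^\times_n(x)=\tfrac12\bigl(C_n(x)-S_n(x)\bigr)$ from the preceding proposition, before committing to the general identity.
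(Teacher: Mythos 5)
Your proposal is correct and is essentially the paper's own argument: your refined convolution rule $(A\circ B)_n(x)=\sum_{ab=n}x^{-a}A_a(x)B_b(x^a)$ is exactly how the paper passes from the set identities \eqref{eq:R1b} and \eqref{eq:R} to Equations~\eqref{eq:R1-length} and \eqref{eq:R-length}, and your Neumann-series inversions $R^1=C^{-1}\circ L^\times$ and $R=\sum_{k\geq0}S\circ(R^1)^{\circ k}$ are precisely the content of Lemma~\ref{lem:inverse}, which the paper establishes instead by induction on $n$; the two organisations of that computation are interchangeable, and your support estimates (sizes $\geq 2^k$, resp.\ $\geq 3$) make the expansions legitimate.

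The endpoint normalisation you single out deserves the emphasis you give it, for a reason you could not have known: carried out correctly, your bookkeeping yields inner factors $C_{d_{i+1}/d_i}(x^{d_i})/x^{d_{i+1}}$ in \eqref{eq:R1solution}, not $/x^{d_i}$ as printed --- and it is the printed statement, not your derivation, that is at fault. Indeed, instantiating Lemma~\ref{lem:inverse} with $B_n(x)=C_n(x)/x^n$, as the paper's proof does, gives $B_{d_{i+1}/d_i}(x^{d_i})=C_{d_{i+1}/d_i}(x^{d_i})/x^{d_{i+1}}$, so each printed summand is off by exactly the factor $x^{d_k-1}$ you feared. Your proposed numerical safeguard catches this at $n=6$: Equation~\eqref{eq:R1-length} with $L^\times_6(x)=2x^2+4x^3+4x^4+2x^5$ and $R^1_3(x^2)=x^4$ gives $R^1_6(x)=2x^2+3x^3+3x^4+2x^5$, whereas the printed \eqref{eq:R1solution} would give $2x^2+4x^3+3x^4+x^5$. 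Equation~\eqref{eq:Rsolution} is unaffected, since there the paper folds $x^{-d}$ into the inner function (taking $C_n(x)=R^1_n(x)/x$ in the lemma), in accordance with your rule that only the outermost factor stays un-reduced.
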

 \begin{proof}
   We reuse the decompositions from the proof of
   Theorem~\ref{thm:size}.  From Equation~\eqref{eq:R1b}, we obtain
   the equality of sets (subscripts denoting the size of the
   compositions we are restricting our attention to)
   $$ 
   L^\times_n = \bigDisjointUnion_{d|n} C_d\circ R^1_{n/d}.
   $$
   Since $l(\alpha\circ\beta) = l(\alpha)-\size{\alpha} +
   \size{\alpha}l(\beta)$, it follows that
   \begin{equation}
     \label{eq:R1-length}
     L^\times_n(x) = \sum_{d|n} C_d(x)x^{-d}R^1_{n/d}(x^d).    
   \end{equation}
   Equation~\eqref{eq:R1solution} then follows from
   Equation~\eqref{eq:solveR1} in Lemma~\ref{lem:inverse} below, with
   $A_n(x)=L^\times_n(x)$, $B_n(x)=C_n(x)/x^n$ and $C_n(x)=R^1_n(x)$.

   Similarly, from Equation~\eqref{eq:R}, we obtain the equality of
   sets
   $$
   R_n=S_n\disjointUnion \bigDisjointUnion_{d|n, d\neq n}R_d\circ R^1_{n/d},
   $$
   and therefore
   \begin{equation}
     \label{eq:R-length}
     R_n(x)=S_n(x)+ \sum_{d|n, d\neq n}R_d(x)x^{-d}R^1_{n/d}(x^d).
   \end{equation}
   Equation~\eqref{eq:Rsolution} then follows from
   Equation~\eqref{eq:solveR} in Lemma~\ref{lem:inverse} below, with
   $A_n(x)=R_n(x)$, $B_n(x)=S_n(x)$ and $C_n(x)=R^1_n(x)/x$.
 \end{proof}
 \begin{rmk}
   Note that for actually computing the generating function for
   normalised compositions using a computer,
   Equations~\eqref{eq:R1-length} and \eqref{eq:R-length} may be
   easier to implement than the \lq explicit\rq\ expressions given in
   the statement of the theorem.

   Again, we can refine the count be marking the number of asymmetric
   irreducible factors with an additional variable $z$: every summand
   in Equation~\eqref{eq:Rsolution} has to be multiplied by $z^k$,
   since every composition in $R^1_n$ contains exactly one asymmetric
   irreducible factor.
 \end{rmk}

 \begin{lem}
   \label{lem:inverse}
   Suppose that $B_1(x)=1$ and
   $$
   A_n(x)=\sum_{d|n} B_d(x) C_{n/d}(x^d).
   $$
   Then we have
   \begin{equation}
     \label{eq:solveR1}
     C_n(x)=%
     \sum_{\substack{k\geq 0\\ 1=d_0|d_1|\dots|d_k|n \\ %
         \text{\shrink{$d_i\neq d_{i+1}$ for $i\in\{0,\dots,k-1\}$}}}}%
     (-1)^k %
     A_{n/d_k}(x^{d_k}) %
     \prod_{i=0}^{k-1} B_{d_{i+1}/d_i}(x^{d_i}).
   \end{equation}
   Given
   $$
   A_n(x)=B_n(x)+\sum_{d|n, d\neq n} A_d(x) C_{n/d}(x^d),
   $$
   we have
   \begin{equation}
     \label{eq:solveR}
     A_n(x)=%
     \sum_{\substack{k\geq 0 \\ %
         d_1|d_2|\dots|d_{k+1}=n \\ %
         \text{\shrink{$d_i\neq d_{i+1}$ for $i\in\{1,\dots,k\}$}}}}%
     B_{d_1}(x) %
     \prod_{i=1}^k C_{d_{i+1}/d_i}(x^{d_i}).
   \end{equation}
 \end{lem}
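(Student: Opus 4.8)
The plan is to recognise both identities as statements about inverses in a single associative algebra, so that the two explicit formulas are nothing but geometric (Neumann) series expansions. Concretely, I would work with sequences $F=(F_n(x))_{n\ge 1}$ of power series, with componentwise addition and the \emph{twisted Dirichlet convolution}
$$(F\star G)_n(x)=\sum_{d\mid n}F_d(x)\,G_{n/d}(x^{d}).$$
The first thing to establish is that $\star$ is associative with two-sided identity $\mathbf 1$ given by $\mathbf 1_1=1$ and $\mathbf 1_n=0$ for $n>1$. Associativity is a short direct check: both $(F\star G)\star H$ and $F\star(G\star H)$ evaluate at $n$ to $\sum_{abc=n}F_a(x)G_b(x^{a})H_c(x^{ab})$. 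Iterating yields the $k$-fold product formula
$$\bigl(F^{(1)}\star\cdots\star F^{(k)}\bigr)_n(x)=\sum_{a_1\cdots a_k=n}\;\prod_{j=1}^{k}F^{(j)}_{a_j}\bigl(x^{a_1\cdots a_{j-1}}\bigr)$$
(empty exponent product $=1$), which is the engine for everything below. With this in hand the hypothesis of the first part reads exactly $A=B\star C$, and since $B_1=1$ the element $B$ is a unit, so $C=B^{-1}\star A$.

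To make $B^{-1}$ explicit I would write $B=\mathbf 1+B'$ with $B'_1=0$ and expand $B^{-1}=\sum_{k\ge0}(-1)^k(B')^{\star k}$. This converges componentwise because $B'_1=0$ forces $(B')^{\star k}_n=0$ whenever $2^{k}>n$, so only finitely many $k$ contribute to each $C_n$. Substituting the $k$-fold product formula into $(B^{-1}\star A)_n=\sum_{m\mid n}(B^{-1})_m(x)\,A_{n/m}(x^{m})$ produces, for each $k$, a sum over $m\mid n$ and factorisations $m=e_1\cdots e_k$ with every $e_i>1$. Reindexing by the partial products $d_0=1$, $d_i=e_1\cdots e_i$ (so $d_k=m$) turns the constraints $e_i>1$ into the chain conditions $1=d_0\mid d_1\mid\cdots\mid d_k\mid n$ with $d_i\ne d_{i+1}$, sends $A_{n/m}(x^{m})$ to $A_{n/d_k}(x^{d_k})$, and sends $\prod_j B_{e_j}(x^{e_1\cdots e_{j-1}})$ to $\prod_{i=0}^{k-1}B_{d_{i+1}/d_i}(x^{d_i})$. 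This is precisely Equation~\eqref{eq:solveR1}.

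The second part is the same argument on the other side. Writing $C'$ for $C$ with its first entry deleted ($C'_1=0$ and $C'_m=C_m$ for $m>1$), the hypothesis says exactly $A=B+A\star C'$, i.e.\ $A\star(\mathbf 1-C')=B$; since $(\mathbf 1-C')_1=1$ this inverts to $A=B\star\sum_{k\ge0}(C')^{\star k}$. Expanding with the $k$-fold formula and reindexing the partial products as $d_1\mid d_2\mid\cdots\mid d_{k+1}=n$ gives Equation~\eqref{eq:solveR} verbatim; note that only values $C_m$ with $m>1$ ever appear, which is why the unspecified entry $C_1$ of the hypothesis plays no role. I expect the one genuinely delicate point to be the bookkeeping of the scaling exponents $x^{a_1\cdots a_{j-1}}$ in the $k$-fold product formula, and the matching of the \lq each factor $>1$\rq\ conditions against the strict-divisibility conditions $d_i\ne d_{i+1}$. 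Once the product formula is correctly stated and the partial-product reindexing is set up, both inversions drop out with no further computation.
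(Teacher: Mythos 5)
Your argument is correct, but it is organised quite differently from the paper's. The paper proves Equation~\eqref{eq:solveR1} by induction on $n$: it solves the hypothesis for $C_N(x)=A_N(x)-\sum_{1<d\mid N}B_d(x)C_{N/d}(x^d)$, substitutes the inductive formula for each $C_{N/d}$, reindexes via $d'_{i+1}=d_i d$ (the same partial-product bookkeeping you perform), and observes that $A_N(x)$ is the summand for the chain $1=d'_0\mid N$; Equation~\eqref{eq:solveR} is then dispatched with ``the same strategy'', without details. You instead erect an algebraic frame: the associative algebra of sequences under the twisted Dirichlet convolution $\star$, in which both hypotheses become $A=B\star C$ and $A\star(\mathbf{1}-C')=B$, and both conclusions are Neumann-series expansions of the relevant inverse. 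All the steps you flag check out: associativity does reduce to the three-fold formula $\sum_{abc=n}F_a(x)G_b(x^a)H_c(x^{ab})$ (the essential point being that $x\mapsto x^a$ is a ring homomorphism, so it passes through the inner convolution), the componentwise termination $(B')^{\star k}_n=0$ for $2^k>n$ is right, and the partial-product reindexing matches the chain conditions of \eqref{eq:solveR1} and \eqref{eq:solveR} exactly, including the asymmetry that $d_k\mid n$ may be an equality in the first formula while $d_{k+1}=n$ is forced in the second, and including the observation that $C_1$ never enters the second formula. What your route buys: a single mechanism yields both identities (the paper's induction, if written out, would be two parallel computations), the strict-divisibility conditions are explained conceptually as ``each factor exceeds $1$'' rather than emerging from the induction, and you actually supply the second proof that the paper omits; your $\star$ is also the natural length-refined companion of Lemma~\ref{lem:Dirichlet}. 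What the paper's route buys: it is shorter on the page, needs no algebraic scaffolding (no associativity check, no discussion of two-sided inverses), and verifies the formula directly against the recurrence --- essentially your geometric series checked termwise by induction.
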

 \begin{proof}
   We prove the statements by induction on $n$.  For $n=1$, the
   hypothesis is $A_1(x)=B_1(x)C_1(x)=C_1(x)$, and the right hand side
   of Equation~\eqref{eq:solveR1} indeed evaluates to $A_1(x)$.

   Now suppose that Equation~\eqref{eq:solveR1} holds for $n<N$.  Then
   \begin{align*}
     C_N(x)&=A_N(x)-\sum_{1<d|N} B_d(x) C_{N/d}(x^d)\\
     &=A_N(x)-\sum_{1<d|N} B_d(x)%
     \sum_{\substack{k\geq 0\\1=d_0|d_1|\dots|d_k|N/d \\ %
         \text{\shrink{$d_i\neq d_{i+1}$ for
             $i\in\{0,\dots,k-1\}$}}}}%
     (-1)^k %
     A_{N/(d_k d)}(x^{d_k d}) %
     \prod_{i=0}^{k-1} B_{d_{i+1}/d_i}(x^{d_i d}).
     \intertext{Substituting $d'_{i+1} = d_i d$ we obtain}
     C_N(x)&=A_N(x)-\sum_{1<d|N} B_d(x)%
     \sum_{\substack{k\geq 0\\d=d'_1|d'_2|\dots|d'_{k+1}|N \\ %
         \text{\shrink{$d'_i\neq d'_{i+1}$ for
             $i\in\{1,\dots,k\}$}}}}%
     (-1)^k %
     A_{N/(d'_{k+1})}(x^{d'_{k+1}}) %
     \prod_{i=1}^k B_{d'_{i+1}/d'_i}(x^{d'_i}) \\
     &=A_N(x)-%
     \sum_{\substack{k\geq 0\\1=d'_0|d'_1|\dots|d'_{k+1}|N \\ %
         \text{\shrink{$d'_i\neq d'_{i+1}$ for
             $i\in\{0,\dots,k\}$}}}}%
     (-1)^k %
     A_{N/(d'_{k+1})}(x^{d'_{k+1}}) %
     \prod_{i=0}^{k} B_{d'_{i+1}/d'_i}(x^{d'_i}).
   \end{align*}
   The final expression is equivalent to the claimed
   Equation~\eqref{eq:solveR1}, since $A_N(x)$ is precisely the
   summand corresponding to the chain $1=d'_0|N$.

   Equation~\eqref{eq:solveR} can be shown using the same strategy,
   the calculations are actually a bit easier.
 \end{proof}

 \section*{Acknowledgements}
 I would like to express my gratitude to Stephanie van Willigenburg
 for suggesting the problem, and her enthusiasm.

 \bibliography{math}
 \bibliographystyle{hplain}

\end{document}